\newcounter{next}
\def\qed{\hfill
\ifhmode\unskip\nobreak\fi\quad\ifmmode\Box\else$\Box$\fi\\ }
\newtheorem{theorem}{Theorem}
\newtheorem{cor}[theorem]{Corollary}
\numberwithin{equation}{section}
\numberwithin{figure}{section}
    \newtheorem{question}[theorem]{\protect\questionname}
\newtheorem{thm}[theorem]{Theorem}
    \providecommand{\questionname}{Question}
\newcommand{\s}{s}
\newcommand{\ms}{\mathcal S}
\begin{document}



\title{The $(2k-1)$-connected multigraphs with at most $k-1$ disjoint cycles}\thanks{The first two
 authors thank Institut Mittag-Leffler (Djursholm, Sweden) for the hospitality and creative environment.}

\author{H.A. Kierstead}

\thanks{Department of Mathematics and Statistics, Arizona State University,
Tempe, AZ 85287, USA. E-mail address: kierstead@asu.edu. Research
of this author is supported in part by NSA grant H98230-12-1-0212.}

\author{A.V. Kostochka}
\thanks{Department of Mathematics, University of Illinois, Urbana, IL, 61801,
USA and Sobolev Institute of Mathematics, Novosibirsk, Russia. E-mail address:
kostochk@math.uiuc.edu. Research of this author is supported in part
by NSF grant   DMS-1266016 and by Grant NSh.1939.2014.1 of the President of
Russia for Leading Scientific Schools.}

\author{E.C. Yeager}
\thanks{Department of Mathematics, University of Illinois, Urbana, IL, 61801,
USA. E-mail address: \linebreak yeager2@illinois.edu. Research of this author is supported in part
by NSF grants  DMS 08-38434 and DMS-1266016.}
\begin{abstract}

In 1963, Corr\' adi and Hajnal proved that for all $k\geq1$ and $n\geq3k$,
every (simple) graph $G$ on $n$ vertices with minimum degree $\delta(G)\geq2k$
contains $k$ disjoint cycles.
The same year, Dirac described the $3$-connected multigraphs not containing two disjoint cycles and asked the more general question:
 Which $(2k-1)$-connected multigraphs do not contain $k$ disjoint cycles?
Recently, the authors characterized the simple graphs $G$ with minimum degree $\delta(G) \geq 2k-1$ that do not contain $k$ disjoint cycles.
We use this result to answer Dirac's question in full.
\end{abstract}

\maketitle
\noindent{\small{Mathematics Subject Classification: 05C15, 05C35, 05C40.}}{\small \par}

\noindent{\small{Keywords: Disjoint cycles, connected graphs, graph packing, equitable coloring, minimum degree.}}{\small \par}



\section{Introduction}
 For a multigraph $G=(V,E)$, let $|G|=|V|$, $\|G\|=|E|$, $\delta(G)$ be the minimum degree of  $G$, and $\alpha(G)$ 
be the independence number of $G$. In this note, we allow multigraphs to have loops as well as multiple edges.
For a simple graph $G$, let $\overline G$ denote the complement of $G$ and
for disjoint graphs $G$ and $H$, let $G\vee H$ denote $G\cup H$ together with all edges from $V(G)$ to $V(H)$.

In 1963, Corr\' adi and Hajnal proved a conjecture of Erd\H os by showing the following:
\begin{thm}[\cite{CH}]
 \label{cht} Let $k\in\mathbb{Z}^{+}$. Every graph $G$ with
$|G|\geq3k$ and $\delta(G)\geq2k$ contains $k$ disjoint cycles.
\end{thm}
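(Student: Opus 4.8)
The plan is to prove Theorem~\ref{cht} by induction on $k$. The case $k=1$ is the standard fact that a graph with minimum degree at least $2$ contains a cycle: take a longest path $v_0v_1\cdots v_\ell$; since it cannot be extended, every neighbour of $v_0$ lies on it, and since $d(v_0)\ge 2$ there is a $j\ge 2$ with $v_0v_j\in E(G)$, giving the cycle $v_0v_1\cdots v_jv_0$. So assume $k\ge 2$ and that the theorem holds for $k-1$.

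Let $G$ satisfy $|G|\ge 3k$ and $\delta(G)\ge 2k$. Since $|G|\ge 3(k-1)$ and $\delta(G)\ge 2(k-1)$, the induction hypothesis yields $k-1$ disjoint cycles in $G$; among all families of $k-1$ disjoint cycles choose $\mathcal{C}=\{C_1,\dots,C_{k-1}\}$ with $|V(\mathcal{C})|:=\bigl|\bigcup_i V(C_i)\bigr|$ minimum, and set $H:=G-V(\mathcal{C})$. If $H$ contains a cycle, then that cycle together with $\mathcal{C}$ gives $k$ disjoint cycles and we are done; so we may assume $H$ is a forest. Hence, if $H$ is nonempty, $H$ has a vertex of degree at most $1$ in $H$, and in fact (when $H$ has an edge) each endpoint of a longest path of $H$ has $H$-degree exactly $1$, since otherwise such an endpoint would have two neighbours on the path and $H$ would contain a cycle.

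Next I would use the minimality of $|V(\mathcal{C})|$ to force each $C_i$ to be a triangle. First, each $C_i$ is chordless: a chord of a cycle of length $\ell\ge 4$ creates a cycle of length at most $\ell-1$ on a subset of its vertices, and substituting it for $C_i$ would contradict minimality. Second, no $C_i$ has length $\ge 4$: one shows that some vertex $w\notin V(C_i)$ has at least three neighbours on $C_i$ — a vertex of $H$ of small $H$-degree, or a vertex of another cycle, has (by $\delta(G)\ge 2k$) enough neighbours on $V(\mathcal{C})$ that, after a short chain of cycle-exchanges, three of them land on a single long cycle — and then two of those neighbours are joined along $C_i$ by an arc of length at most $\lfloor|C_i|/3\rfloor$, so that arc plus $w$ is a cycle on fewer than $|C_i|$ vertices, again contradicting minimality. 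Therefore $|V(\mathcal{C})|=3(k-1)=3k-3$, and $|H|=|G|-3k+3\ge 3$.

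The final step is the endgame, with all cycles triangles and $|H|\ge 3$, and this is where the real work lies. A vertex $x\in H$ with $d_H(x)\le 1$ has at least $2k-1$ neighbours among the $3k-3$ vertices of $V(\mathcal{C})$, hence is adjacent to all three vertices of at least one triangle, say $C_1=\{a,b,c\}$. Bringing in a second, carefully chosen vertex of $H$ — an $H$-neighbour of $x$, another low-$H$-degree vertex, or the far endpoint of a longest path of $H$ through $x$ — and again using $\delta(G)\ge 2k$ to control its neighbours on the triangles, one produces either a cycle inside $H$ disjoint from $\mathcal{C}$, or a bounded collection of triangles of $\mathcal{C}$ that, together with a bounded number of vertices of $H$, can be re-partitioned into strictly more disjoint cycles; either way we obtain $k$ disjoint cycles, a contradiction. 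I expect this endgame — guaranteeing that the adjacencies needed for the re-partition actually occur, and organising the resulting exchange — to be the main obstacle, with the triangle reduction the second most delicate point, while the induction skeleton and the chord elimination are routine. (The degenerate possibilities that $H$ is empty or very small, which force $G$ to be covered by a few long cycles, are handled by the same kind of exchange arguments.)
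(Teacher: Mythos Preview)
The paper does not prove Theorem~\ref{cht}; it is quoted from Corr\'adi and Hajnal~\cite{CH} and used as a black box (e.g.\ in the one-line proof of Theorem~\ref{chm}). So there is no ``paper's own proof'' to compare your attempt against.

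On the attempt itself: the architecture you outline --- induct on $k$, take a vertex-minimal family of $k-1$ disjoint cycles, argue the remainder $H$ is a forest, force every $C_i$ to be a triangle by minimality, then run an exchange argument between low-degree vertices of $H$ and the triangles --- is indeed the skeleton of the original Corr\'adi--Hajnal proof. But what you have written is a plan, not a proof. The two steps you yourself flag as the hard ones are exactly where the content lies, and both are left at the level of ``one shows'' and ``one produces''. In particular, your triangle-reduction step asserts that some vertex $w\notin V(C_i)$ has three neighbours on the specific long cycle $C_i$; a single pigeonhole on a leaf of $H$ only gives three neighbours on \emph{some} cycle, which may already be a triangle, and turning that into three neighbours on $C_i$ is precisely the ``chain of cycle-exchanges'' you mention but do not perform. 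Likewise the endgame --- with all $C_i$ triangles and $|H|\ge 3$ --- requires a genuine case analysis (two leaves of $H$, their overlap patterns on the triangles, and the resulting re-partition), which in the original paper occupies several pages. If you need the theorem only as a tool, citing~\cite{CH} as the present paper does is the right move; if you want a self-contained proof, the gaps you identified must actually be filled.
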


The hypothesis $\delta(G)\geq2k$ is best possible, as shown by the $3k$-vertex graph \linebreak
$H=\overline{K}_{k+1}\vee K_{2k-1}$, which has $\delta(H)=2k-1$ but does not contain $k$ disjoint cycles.
Recently, the authors refined Theorem~\ref{cht} by characterizing all simple 
 graphs
that fulfill the weaker hypothesis $\delta(G) \geq 2k-1$ and contain $k$ disjoint cycles. This refinement depends on an extremal graph $Y_{k,k}$. 

Let 
$Y_{h,t}= \overline{K}_h\vee(K_t\cup K_t)$ (Figure~\ref{Yht}),
where 
  $V(\overline{K}_h)=X_0$ and the cliques have vertex sets $X_1$ and $X_2$.
In other words, $V(Y_{h,t})=X_0\cup X_1\cup X_2$ with $|X_0|=h$ and $|X_1|=|X_2|=t$, and
a pair $xy$ is an edge in $Y_{h,t}$ iff $\{x,y\}\subseteq X_1$, or $\{x,y\}\subseteq X_2$, or
$|\{x,y\}\cap X_0|=1$.

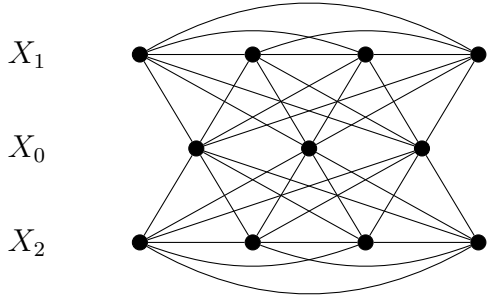
\begin{figure}[ht]
\begin{tikzpicture}
\foreach \x in {1,...,4}{
	\draw (1.5*\x,1.75) node[shape=circle,fill=black,inner sep=0pt,minimum size=2mm,draw] (v\x1) {};
\draw (1.5*\x,4.25) node[shape=circle,fill=black,inner sep=0pt,minimum size=2mm,draw] (v\x3) {};
	}
\foreach \x in {2,3,4}{
	\draw (-.75+1.5*\x,1.5*2) node[shape=circle,fill=black,inner sep=0pt,minimum size=2mm,draw] (v\x2) {};}
\foreach \y in {2,3,4}
	\foreach \x in {1,...,4}{
		\draw (v\x1) -- (v\y2);}
\foreach \y in {1,...,4}
	\foreach \x in {2,3,4}{
		\draw (v\x2) -- (v\y3);}
\draw (v11)--(v21)--(v31)--(v41);
\draw (v13)--(v23)--(v33)--(v43);
\draw (v31) to[out=200, in=-20] (v11);
\draw (v41) to[out=200, in=-20] (v21);
\draw (v41) to[out=210, in=-30] (v11);
\draw (v33) to[out=-200, in=20] (v13);
\draw (v43) to[out=-200, in=20] (v23);
\draw (v43) to[out=-210, in=30] (v13);
\draw (0,1.75) node{$X_2$};
\draw (0,3) node{$X_0$};
\draw (0,4.25) node{$X_1$};
\end{tikzpicture}\caption{$Y_{h,t}$, shown with $h=3$ and $t=4$.}\label{Yht}
\end{figure}

\begin{thm}[\cite{KK2}]
\label{ch++}Let $k\geq 2$. Every simple 
graph $G$ with $|G|\geq3k$
and $\delta(G)\geq2k-1$ contains $k$ disjoint cycles if and
only if:
\begin{enumerate}[label=(\roman*), ref=(\alph*)] 
\item $\alpha(G)\leq|G|-2k$;
\item if $k$ is odd and $|G|=3k$, then $G\neq Y_{k,k} $; and
\item if $k=2$ then $G$ is not a wheel.
\end{enumerate}
\end{thm}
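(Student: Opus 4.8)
\medskip

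\noindent\emph{A sketch of how Theorem~\ref{ch++} can be proved.}

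\emph{Necessity} of (i)--(iii) is the easy direction. If $\alpha(G)\ge|G|-2k+1$, fix a largest independent set $I$; since $I$ is independent, every cycle of $G$ contains at least two vertices of $V(G)\setminus I$, while $|V(G)\setminus I|\le 2k-1$, so $G$ has at most $k-1$ disjoint cycles. If $k$ is odd and $G=Y_{k,k}$, then because $|G|=3k$ any $k$ disjoint cycles would be $k$ triangles partitioning $V(G)$; no triangle can contain two vertices of $X_0$ or one vertex from each of $X_1,X_2$, so each triangle lies in $X_1$, lies in $X_2$, or has exactly one vertex in $X_0$, and writing out the covering equations for $X_0,X_1,X_2$ forces $|X_1|=|X_2|$ to be even --- impossible for $k$ odd. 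Finally, if $k=2$ and $G$ is a wheel with hub $h$, every cycle of $G$ is either the rim or passes through $h$ together with a rim vertex, so $G$ has no two disjoint cycles.

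\emph{Sufficiency} is where the work is; I would argue by induction on $k$. We may assume $\delta(G)=2k-1$, since $\delta(G)\ge 2k$ already gives $k$ disjoint cycles by Theorem~\ref{cht}. The base case $k=2$ follows from the classical structural description of the graphs with no two disjoint cycles: once one discards $K_5$ (too few vertices), discards the $K_{3,m}$-type graphs (which violate (i)), and observes that a graph with $\delta\ge 3$ cannot have a vertex whose deletion leaves a forest, only the wheels remain --- exactly what (iii) forbids.

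For the inductive step ($k\ge 3$, so (iii) is vacuous), suppose $G$ satisfies (i) and (ii) but has no $k$ disjoint cycles. The goal is to find a triangle $T$ for which $G-V(T)$ still meets the hypotheses of the theorem for $k-1$ --- minimum degree at least $2k-3$, independence number at most $|G-V(T)|-2(k-1)$, and $G-V(T)$ neither $Y_{k-1,k-1}$ (when $k-1$ is odd and $|G-V(T)|=3(k-1)$) nor a wheel (when $k=3$) --- so that $k-1$ disjoint cycles in $G-V(T)$ together with $T$ complete the proof; the bulk of the argument is producing such a $T$. I would work with a family $\mathcal C=\{C_1,\dots,C_{k-1}\}$ of disjoint cycles (available by Theorem~\ref{cht}), chosen to first maximize the number of triangles among the $C_i$ and then minimize $\sum_i|C_i|$; the leftover $R:=G-V(\mathcal C)$ is then a forest with $|R|\ge 3$. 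Combining $\delta(G)=2k-1$ with condition (i), a counting argument forces some vertex of $R$ to have at least three neighbours on a single $C_i$, or two vertices of $R$ to have appropriately placed neighbours on a common $C_i$; a rotation/exchange argument then either produces $k$ disjoint cycles or contradicts the extremal choice of $\mathcal C$ --- unless the adjacencies between $R$ and $\bigcup_iV(C_i)$, and among the $C_i$, are extremely rigid. In that regime condition (i) squeezes $|R|$, hence $|G|$, down to near $3k$; and when $|G|=3k$ the problem becomes exactly whether $G$ has a triangle factor, equivalently whether $\overline G$ --- which now has maximum degree at most $k$ --- has an equitable $k$-colouring, whose only obstructions are $K_{k+1}\subseteq\overline G$ (the failure of (i)) and $\overline G\cong K_k\cup K_{k,k}=\overline{Y_{k,k}}$ with $k$ odd (the failure of (ii)).

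\emph{The main obstacle} is precisely this rigid-case analysis: one must show that no near-extremal configuration other than $Y_{k,k}$ (with $k$ odd) and the wheel (for $k=2$) genuinely blocks $k$ disjoint cycles. This calls for a long, careful case analysis of the bipartite structure between $R$ and the cycles $C_i$, interlocked throughout with the hypothesis $\alpha(G)\le|G|-2k$ and with the need to preserve the non-exceptionality conditions after deleting a triangle; and in the tight case $|G|=3k$ it leans on a sharp equitable-colouring theorem for graphs of maximum degree exactly $k$, itself a substantial refinement of the Hajnal--Szemer\'edi theorem. By comparison, the generic rotation arguments, once the extremal family $\mathcal C$ is in hand, are routine.
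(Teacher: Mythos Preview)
The paper does not contain a proof of Theorem~\ref{ch++}: it is quoted from \cite{KK2} and used here only as a black box (restated as Theorem~\ref{kky}) in the proof of the main result, Theorem~\ref{dm}. So there is no ``paper's own proof'' to compare your attempt against.

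That said, your necessity arguments for (i)--(iii) are correct and standard. Your sufficiency sketch is a plausible high-level outline---induction on $k$, an extremal family of $k-1$ disjoint cycles optimized for triangles and total length, exchange arguments between the leftover forest and the cycles, and in the tight case $|G|=3k$ the translation to equitable colouring of $\overline G$---and you rightly identify the rigid near-extremal analysis as the genuine obstacle. But as written this is a strategy, not a proof: the sentence ``a rotation/exchange argument then either produces $k$ disjoint cycles or contradicts the extremal choice of $\mathcal C$'' hides essentially all of the content, and the appeal to ``a sharp equitable-colouring theorem for graphs of maximum degree exactly $k$'' is itself citing a nontrivial result (of Chen--Lih--Wu / Kierstead--Kostochka type) rather than proving anything. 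If your intent was only to indicate why the exceptions (i)--(iii) are the right ones and where the difficulty lies, the sketch serves that purpose; if it was meant as an actual proof, the inductive step is not carried out.
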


 Extending Theorem~\ref{cht}, Dirac and Erd\H os~\cite{DE} showed that if a graph $G$ has many more vertices of degree at least $2k$ than vertices of
lower degree, then $G$ has $k$ disjoint cycles. 
\begin{thm}[\cite{DE}]
If $G$ is a simple graph and $k \geq 3$, and if the number of vertices in $G$ with degree at least $2k$ exceeds the number of vertices with degree at most $2k-2$ by at least $k^2+2k-4$, then $G$ contains $k$ disjoint cycles.
\end{thm}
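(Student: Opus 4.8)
The plan is to argue by contradiction using the standard extremal device for Corr\'adi--Hajnal-type statements. Write $H$ for the set of vertices of degree $\ge 2k$ and $L$ for those of degree $\le 2k-2$, so that by hypothesis $|H|-|L|\ge k^2+2k-4$; note first that this already gives $|G|\ge|H|\ge k^2+2k-4\ge 3k$ since $k\ge 3$, so $G$ is automatically large (and this is exactly why $k\ge 3$ is needed). Suppose $G$ has no $k$ disjoint cycles and choose pairwise disjoint cycles $C_1,\dots,C_m$ with $m$ as large as possible (so $m\le k-1$) and, subject to that, with $\sum_i|C_i|$ as small as possible. Put $W=\bigcup_iV(C_i)$ and $F=G-W$.

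The first block of work is structural, squeezing everything out of extremality. Since the family is maximum, $F$ is a forest, so $\|F\|\le|F|-1$. Minimality of $\sum_i|C_i|$ makes each $C_i$ chordless, so each of its vertices has exactly two neighbours on $C_i$. Rerouting then limits how heavily $F$ can lean on any one cycle: a vertex $x\in F$ with three neighbours on $C_i$ splits $C_i$ into three arcs, the shortest of which closes up with $x$ into a cycle shorter than $C_i$ unless $|C_i|=3$; similarly a path in $F$ joining a neighbour of $u$ to a neighbour of $u'$, for non-consecutive $u,u'\in C_i$, shortens $C_i$ once the unused arc is long enough, and an ear-exchange of this sort that frees a long sub-arc of $C_i$ while keeping $\mathcal F\setminus\{C_i\}$ intact produces either a shorter or a larger family. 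Pushing such replacements to exhaustion, the target is two quantitative statements: every $C_i$ has length $O(k)$ (so $|W|=O(k^2)$), and the number of edges from $F\cap H$ into a single $C_i$ is $O(|C_i|)$, hence $e(F\cap H,W)=O(|W|)=O(k^2)$.

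The second block is a degree count. Each $v\in F\cap H$ has $\deg_G v\ge 2k$ while $\sum_{v\in F}\deg_F v=2\|F\|<2|F|$, so $e(F\cap H,W)\ge 2k\,|F\cap H|-2|F|\ge 2k\bigl(|H|-|W|\bigr)-2|F|$. Combined with the structural bound $e(F\cap H,W)=O(k^2)$ and with $|H|-|W|\ge |L|+k^2+2k-4-O(k^2)$, this forces $|F|$ to be large; but the part of $F$ outside $H$ is controlled, since $|F\cap L|\le|L|$ and a vertex of $F$ of degree exactly $2k-1$ still pushes almost all its edges into $W$, so re-running the count on all of $F$ makes $e(F,W)$ exceed what $W$ can absorb. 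One arranges the constants so that $k^2+2k-4$ is precisely the value of $|H|-|L|$ at which the lower and upper estimates first collide; the few genuinely small configurations (notably $k=3$, where $|W|$ need not be small compared with $|H|$) are cleared up separately, either directly or by applying Theorem~\ref{ch++} (or Theorem~\ref{cht}) to a suitable subgraph of minimum degree at least $2k-1$.

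I expect the middle paragraph to be the real obstacle: proving rerouting/ear-exchange lemmas strong enough to bound both $|C_i|$ and the number of edges $F\cap H$ sends to $C_i$ by quantities linear in $|C_i|$. The naive estimates (``at most three $F$-neighbours on a triangle'') are hopeless, because a vertex of $W$ may legitimately have enormous degree; the content is that any such concentration manufactures either a shortcut or a disjoint extra cycle, contradicting the choice of the family. Getting the aggregate slack down to the \emph{quadratic} $k^2+2k-4$, rather than a cubic quantity, is where essentially all the difficulty — and presumably the precise constant — resides. The alternative route, induction on $k$ with base case $k=3$, deleting at each step one cycle whose removal demotes at most $2k+1$ vertices out of $H\cup\{v:\deg v=2k-1\}$ (note $k^2+2k-4-(2k+1)=(k-1)^2+2(k-1)-4$), runs into the same difficulty in the guise of choosing that cycle wisely.
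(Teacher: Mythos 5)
The paper does not prove this statement at all: it is quoted verbatim as a known theorem of Dirac and Erd\H os~\cite{DE}, so there is no in-paper argument to compare yours against, and your proposal has to stand on its own. As written it is a plan rather than a proof, and you say as much yourself. The opening block (a maximum family of disjoint cycles, minimum total length, $F=G-W$ a forest, chordless cycles, the ``three neighbours on $C_i$ forces $|C_i|=3$ or a shorter cycle'' exchange) is the standard and correct Corr\'adi--Hajnal setup. But everything that would actually yield the theorem is deferred: the two ``target'' statements --- $|C_i|=O(k)$ for every $i$, and $e(F\cap H,C_i)=O(|C_i|)$ --- are asserted, not proved, and neither follows from the exchanges you describe. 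Unlike in Theorem~\ref{cht}, there is no global minimum-degree hypothesis here (vertices of degree exactly $2k-1$ are unconstrained and may be arbitrarily numerous), so minimality of $\sum_i|C_i|$ does not by itself bound the cycle lengths, and a vertex of $W$ may have arbitrarily many neighbours in $F$. The only easy per-vertex bound (each $v\in F$ has at most $2$ neighbours on a $C_i$ with $|C_i|\ge4$ and at most $3$ on a triangle) gives $e(v,W)\le 3(k-1)$, which is useless against $\deg_G v\ge 2k$; the real work is to control how many vertices of $F\cap H$ can simultaneously saturate the same short cycle without creating an extra disjoint cycle, and that is precisely the part you flag as ``the real obstacle'' and do not carry out.

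Most tellingly, the constant $k^2+2k-4$ is never derived: ``one arranges the constants so that the lower and upper estimates first collide'' is not an argument, and the alternative induction sketched at the end hinges on ``choosing that cycle wisely,'' which you acknowledge you cannot yet do (nor is it clear that removing one cycle demotes only $O(k)$ vertices, since a single cycle vertex may be the degree-critical neighbour of many vertices of $H$). So the proposal identifies a plausible framework and correctly isolates where the difficulty lies, but the entire quantitative core of the theorem is missing; as written it does not constitute a proof.
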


Dirac~\cite{Di} described all $3$-connected multigraphs that do not have two disjoint cycles and posed
 the following question:
\begin{question}[\cite{Di}]\label{DiracQ}
 Which $(2k-1)$-connected multigraphs\footnote{Dirac used the word {\em graphs}, but in~\cite{Di} this appears to mean {\em multigraphs}.} do not have $k$ disjoint cycles?
 \end{question}

We consider the class $\mathcal D_k$ of multigraphs in which each vertex has at least $2k-1$ distinct neighbors. Our main result, Theorem~\ref{dm}, characterizes those multigraphs in $\mathcal D_k$ that do not contain $k$ disjoint cycles. Every ($2k-1$)-connected multigraph is in $D_k$, so this provides a complete answer to Question~\ref{DiracQ}. Determining whether a multigraph is in $\mathcal D_k$, and determining whether a multigraph is ($2k-1$)-connected, can be accomplished in polynomial time.

In the next section, we introduce notation,
discuss existing results to be used later on,
 and state our main result, Theorem~\ref{dm}. 
In the last two sections, we prove Theorem~\ref{dm}.

\section{Preliminaries and statement of the main result}
\subsection{Notation}
For every multigraph $G$, let  $V_1=V_1(G)$ be the set of vertices in $G$ incident to loops.
Let $\widetilde{G}$ denote the {\em underlying simple graph of $G$}, i.e. the
simple graph on $V(G)$ such that two vertices are adjacent in $G$ if and only if they are adjacent in $\widetilde G$. Let
 $F=F(G)$ be the simple graph formed by the multiple edges in $G-V_1$;
 that is, if $G'$ is the subgraph of $G-V_1$ induced by its multiple edges, then 
 $F=\widetilde{G'}$.
We will call the edges of $F(G)$ {\em the strong edges of } $G$, and 
define $\alpha'=\alpha'(F)$ to be the size of a maximum matching in $F$.
A set $S=\{v_0,\ldots,v_s\}$ of vertices in a graph $H$ is a {\em superstar with center $v_0$ in $H$} if
$N_H(v_i)=\{v_0\}$ for each $1\leq i\leq s$ and $H-S$ has a perfect matching.

For $v \in V$, we define $\s(v)=|N(v)|$ to be the \emph{simple degree} of $v$, and we say that
$\ms(G)=\min\{\s(v):v \in V\}$ is the \emph{minimum simple degree} of $G$.
We define $\mathcal{D}_k$ to be the family of multigraphs $G$ with $\ms(G) \geq 2k-1$.
By the definition of $\mathcal{D}_k$, $\alpha(G)\leq n-2k+1$ for every $n$-vertex $G\in\mathcal{D}_k$;
so we call $G\in\mathcal{D}_k$ {\em extremal} if $\alpha(G)= n-2k+1$.
A {\em big set} in an extremal $G\in\mathcal{D}_k$
 is an independent set of size $\alpha(G)$. If $I$ is a big set in an extremal $G\in\mathcal{D}_k$, then  since $\s(v)\geq 2k-1$, each $v\in I$
 is adjacent to
each $w\in V(G)-I$. Thus
\begin{equation}\label{l4}
\mbox{every two big sets in any extremal $G$ are disjoint.}
\end{equation}

\subsection{Preliminaries and main result}
Since every cycle in a simple graph has at least $3$ vertices, the condition $|G|\geq3k$ is necessary in Theorem~\ref{cht}.
However, it is not necessary for multigraphs, since loops and multiple edges form cycles with fewer than three vertices. 
Theorem~\ref{cht} can easily be extended to multigraphs, although  the statement is no longer as simple:


 \begin{theorem}\label{chm}
 For $k \in \mathbb Z^+$,
let $G$ be a multigraph with $\ms(G) \geq 2k$, and set $F=F(G)$ and $\alpha'=\alpha'(F)$.
Then $G$ has no $k$ disjoint cycles if and only if
\begin{equation}\label{eqn:chm}|V(G)|-|V_1(G)|-2\alpha'<3(k-|V_1|-\alpha'),\end{equation}
i.e.,
 $ |V(G)|+2|V_1|+\alpha'<3k$.  
  \end{theorem}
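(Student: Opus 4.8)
The plan is to reduce the multigraph statement to the simple-graph Corr\'adi--Hajnal theorem (Theorem~\ref{cht}) by ``absorbing'' loops and strong edges into short cycles and then packing genuine (length $\geq 3$) cycles into the remainder.

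First I would prove the easy direction ($\Leftarrow$), that the inequality $|V(G)|+2|V_1|+\alpha'<3k$ forces the absence of $k$ disjoint cycles. Suppose for contradiction that $\mathcal C=\{C_1,\dots,C_k\}$ is a family of $k$ disjoint cycles in $G$. Each $C_i$ is of one of three types: a loop (using $1$ vertex, and that vertex lies in $V_1$), a digon formed by a pair of multiple edges (using $2$ vertices, joined by a strong edge unless one endpoint is in $V_1$), or an ordinary cycle of length $\geq 3$. Let there be $a$ loops, $b$ digons, and $c$ ordinary cycles among the $C_i$, so $a+b+c=k$. The $a$ loops sit at $a$ distinct vertices of $V_1$, so $a\le |V_1|$. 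The digons not centered at a loop-vertex each contribute a distinct strong edge of $F$, and these strong edges are pairwise disjoint, so (counting crudely) $b\le \alpha'+|V_1|$ — actually one must be a little careful, since a digon could reuse a $V_1$-vertex; I would argue that after removing the $a$ loop-vertices we are left with $k-a$ disjoint cycles in $G-V_1$, of which the digons give a matching in $F$, hence $b\le\alpha'$, and the $c$ ordinary cycles need $3c$ further vertices. Summing the vertex usage: $|V(G)|\ge a+2b+3c = 3k-2a-b\ge 3k-2|V_1|-\alpha'$, contradicting the hypothesis. This direction is essentially bookkeeping.

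The harder direction ($\Rightarrow$) is the main obstacle. Here I would assume \eqref{eqn:chm} fails, i.e. $|V(G)|+2|V_1|+\alpha'\ge 3k$, and build $k$ disjoint cycles. The idea: first greedily peel off loops at vertices of $V_1$ (each a $1$-vertex cycle) and a maximum matching of $\alpha'$ strong edges in $F$ (each spanning a digon), obtaining $t:=|V_1|+\alpha'$ disjoint ``cheap'' cycles, provided $t\le k$ — if $t\ge k$ we are already done. Deleting their $|V_1|+2\alpha'$ vertices leaves a multigraph $G'$ on $|V(G)|-|V_1|-2\alpha'$ vertices in which we need $k':=k-|V_1|-\alpha'$ more disjoint cycles, and $|V(G')|\ge 3k'$ by our assumption. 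The delicate point is degree control: deleting up to $|V_1|+2\alpha'$ vertices can drop degrees, and worse, $G'$ may still have loops or multiple edges that interfere with applying Theorem~\ref{cht} to $\widetilde{G'}$. So the real work is to choose \emph{which} loops and strong-edge digons to extract so that the leftover graph, viewed as a simple graph, still has minimum degree $\ge 2k'$. Since $\ms(G)\ge 2k$ and we remove at most $|V_1|+2\alpha' = t+\alpha'$ vertices while needing to lose only $2(|V_1|+\alpha')=2t$ from the degree bound, there is a slack of $t-\alpha'\ge 0$ that must be spent carefully; I expect one argues that a vertex $v$ of $\widetilde{G'}$ that has lost too many neighbors must have had many neighbors inside $V_1\cup V(F)$, and one can then reroute — swap $v$ (or one of its strong edges) into the cheap family and a cheap vertex out — to restore the bound. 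This exchange/augmentation argument, making the reduction to Theorem~\ref{cht} actually go through, is where the effort lies; once $\widetilde{G'}$ has minimum degree $\ge 2k'$ and $\ge 3k'$ vertices, Theorem~\ref{cht} finishes it, and the union of the cheap cycles with the $k'$ cycles in $\widetilde{G'}\subseteq G'$ gives the desired $k$ disjoint cycles in $G$.

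I would present the argument by induction on $k$ (or on $|V_1|+\alpha'$), which smooths the exchange step: if there is any loop or strong edge, extract one cheap cycle, verify $\ms$ drops to $\ge 2(k-1)$ in the remainder (this is the one inequality requiring genuine care, using $\ms(G)\ge 2k$ and the structure of $F$ and $V_1$), and invoke the inductive hypothesis; the base case $|V_1|=\alpha'=0$ is exactly Theorem~\ref{cht} with the observation that \eqref{eqn:chm} then reads $|V(G)|<3k$. The one subtlety to flag in the inductive step is that removing a strong edge $uv$ lowers $\alpha'$ by exactly $1$ only if $uv$ lies in \emph{some} maximum matching of $F$ (which it does, e.g. an edge of a fixed maximum matching), and removing $u,v$ could in principle lower $\alpha'(F)$ by more than $1$ — but that only helps, as it makes the target inequality easier. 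Ensuring $\ms\ge 2(k-1)$ after deleting two adjacent vertices from a multigraph with $\ms\ge 2k$ is immediate unless some vertex $w$ was adjacent (in $\widetilde G$) to both $u$ and $v$; since $\ms(G)\ge 2k\ge 2$, $w$ retains $\ge 2k-2$ simple neighbors, so the bound holds. That resolves the induction.
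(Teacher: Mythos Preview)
Your overall plan---peel off $|V_1|$ loops and a maximum strong matching as cheap cycles, then apply Theorem~\ref{cht} to the remainder---is exactly the paper's, but the paper's proof is two sentences because the ``delicate points'' you flag are not delicate. Simplicity of $G':=G-V_1-V(M)$ is automatic: removing $V_1$ kills every loop, and if $M$ is a \emph{maximum} matching in $F$ then $F-V(M)$ is edgeless (any surviving strong edge would augment $M$), so $G'$ has no multiple edges either. The degree bound is also automatic: removing $|V_1|+2\alpha'$ vertices from a graph with $\ms(G)\ge 2k$ leaves minimum degree at least $2k-|V_1|-2\alpha'\ge 2k-2|V_1|-2\alpha'=2k'$. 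Your own slack computation ($t-\alpha'=|V_1|\ge0$) already establishes this; the exchange and induction machinery you propose is unnecessary.

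Two minor errors. In the easy direction, your claim that the $k-a$ non-loop cycles lie in $G-V_1$ is false: a digon (or longer cycle) may still pass through a vertex of $V_1$, since $V_1$-vertices can also carry multiple edges. The fix is to count digons meeting $V_1$ separately: each such digon occupies its own vertex of $V_1$, so the number of loops plus the number of $V_1$-touching digons is at most $|V_1|$, while the remaining digons form a matching in $F$ and hence number at most $\alpha'$; then $|V(G)|\ge 3k-2|V_1|-\alpha'$ follows. Second, your remark that ``if $\alpha'$ drops by more than $1$ that only helps'' has the sign backwards: in the hard direction you need the \emph{failure} of \eqref{eqn:chm}, i.e.\ $|V|+2|V_1|+\alpha'\ge3k$, to persist after deletion, so a larger drop in $\alpha'$ would hurt, not help. (This is moot anyway, since deleting both endpoints of an edge of a maximum matching lowers $\alpha'$ by exactly one.)
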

\begin{proof}
If (\ref{eqn:chm}) holds, then $G$ does not have enough vertices to contain $k$ disjoint cycles.
If (\ref{eqn:chm}) fails, then we choose $|V_1|$ cycles of length one and $\alpha'$ cycles of length  two from $V_1 \cup V(F)$.
 By Theorem~\ref{cht}, the remaining (simple) graph contains $k-|V_1|-\alpha'$ disjoint cycles.
\end{proof}
Theorem~\ref{chm} yields the following.

 \begin{cor}\label{dm1}
Let $G$ be a multigraph with $\ms(G)\geq 2k-1$ for some integer $k \geq 2$, and set $F=F(G)$ and $\alpha'=\alpha'(F)$.
 Suppose $G$ contains at least one loop.
Then $G$ has no $k$ disjoint cycles if and only if
 $ |V(G)|+2|V_1|+\alpha'<3k$.
  \end{cor}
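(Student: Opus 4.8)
The plan is to reduce Corollary~\ref{dm1} to Theorem~\ref{chm} by exploiting the presence of a loop. Suppose $G\in\mathcal D_k$ has a vertex $v_0$ incident to a loop, so $v_0\in V_1(G)$. The idea is to contract the loop: consider $G'=G-v_0$, and choose the single-vertex cycle $\{v_0\}$ as one of our $k$ disjoint cycles. Then $G$ has $k$ disjoint cycles if and only if $G'$ has $k-1$ disjoint cycles (disjoint from $v_0$). The subtlety is that deleting $v_0$ may drop the minimum simple degree of the remaining graph below $2(k-1)$, so $G'$ need not lie in $\mathcal D_{k-1}$, and we cannot apply Theorem~\ref{chm} directly to $G'$.

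To get around this, first I would observe that the ``only if'' direction is immediate: if $|V(G)|+2|V_1|+\alpha'<3k$ then, after peeling off $|V_1|$ loops and $\alpha'$ disjoint strong-edge digons, fewer than $3(k-|V_1|-\alpha')$ vertices remain, so by the vertex-count obstruction there cannot be $k-|V_1|-\alpha'$ further disjoint (simple) cycles, hence no $k$ disjoint cycles in $G$. For the ``if'' direction, suppose $|V(G)|+2|V_1|+\alpha'\ge 3k$; I want to produce $k$ disjoint cycles. As in the proof of Theorem~\ref{chm}, extract $|V_1|$ loops and a maximum matching of $F$ giving $\alpha'$ digons, all on the vertex set $V_1\cup V(F)$; after removing these $|V_1|+\alpha'$ cycles we are left with a simple graph $G^*$ (a subgraph of $\widetilde G$ minus some vertices), and it suffices to find $k':=k-|V_1|-\alpha'$ disjoint cycles in $G^*$, where $|V(G^*)| = |V(G)| - |V_1| - 2\alpha' \ge 3k - 3|V_1| - 3\alpha' = 3k'$.

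The point where I expect to need care is verifying that $G^*$ satisfies the hypotheses of Theorem~\ref{cht}, i.e. $\delta(G^*)\ge 2k'$, or else handling the cases where it does not. A vertex $w\in V(G^*)$ lost at most one neighbor for each loop-vertex and at most one endpoint of each matching digon, so naively $\delta(G^*)\ge (2k-1) - |V_1| - \alpha'$, which falls short of $2k' = 2k-2|V_1|-2\alpha'$ by $|V_1|+\alpha'-1$. So a direct degree count is not enough when $|V_1|+\alpha'\ge 2$. However, we have the extra hypothesis that $k\ge 2$ and there is at least one loop; the most efficient route is probably to argue by a careful choice of which structures to delete, or to set up an induction on $k$: delete a single loop-vertex $v_0$, and argue that the resulting multigraph $G-v_0$ either still lies in $\mathcal D_{k-1}$ (then recurse, noting that $G-v_0$ has a loop or a strong edge unless we are in a small base case) or fails the inequality, in which case $G$ itself fails $|V(G)|+2|V_1|+\alpha'<3k$ after accounting for the removed loop --- a contradiction. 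The main obstacle, then, is bookkeeping the parameters $|V_1|$, $\alpha'$, and $\ms$ under deletion of a loop-vertex, and checking that the low-degree vertices created are few enough that one can still greedily pack short cycles through them; this is where I would spend the bulk of the argument, likely splitting into the case $k'=0$ or $1$ (handled by hand) and $k'\ge 2$ (handled by Theorem~\ref{cht} after showing the degree deficiency can be absorbed).
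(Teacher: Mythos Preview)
Your opening move---delete a loop vertex $v_0$ and reduce to finding $k-1$ disjoint cycles in $G-v_0$---is exactly the paper's (implicit) proof, but you then manufacture a difficulty that does not exist. You write that ``deleting $v_0$ may drop the minimum simple degree of the remaining graph below $2(k-1)$'' and that therefore Theorem~\ref{chm} cannot be applied to $G-v_0$. This is false: removing a single vertex lowers each simple degree by at most one, so $\ms(G-v_0)\ge (2k-1)-1=2(k-1)$, which is \emph{precisely} the hypothesis of Theorem~\ref{chm} with $k-1$ in place of $k$. (Note Theorem~\ref{chm} requires $\ms\ge 2k$, not membership in $\mathcal D_k$.) Since $v_0\in V_1(G)$, the graph $F$ and hence $\alpha'$ are unchanged, while $|V|$ and $|V_1|$ each drop by one; the inequality $|V(G-v_0)|+2|V_1(G-v_0)|+\alpha'<3(k-1)$ rewrites as $|V(G)|+2|V_1|+\alpha'<3k$. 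Together with the trivial observation that $G$ has $k$ disjoint cycles iff $G-v_0$ has $k-1$ (the loop at $v_0$ supplies the extra cycle in one direction; disjointness gives the other), this finishes the corollary in one line.

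Because you missed this, the bulk of your proposal chases a non-issue. Your attempt to strip all of $V_1$ and a maximum strong matching simultaneously and then invoke Theorem~\ref{cht} on the residual simple graph runs into the degree shortfall you yourself computed, and you never resolve it---the proposal ends with a list of cases you ``would'' handle. Also, your ``only if'' argument is phrased as showing that one particular packing strategy (loops first, then digons, then triangles) fails, which is not the same as showing every packing fails; the correct counting argument bounds the number of $1$-, $2$-, and $(\ge 3)$-cycles in \emph{any} collection of $k$ disjoint cycles by $|V_1|$, $\alpha'$, and the remainder respectively.
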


Instead of the $(2k-1)$-connected multigraphs of Question~\ref{DiracQ},
we consider the wider family
 $\mathcal{D}_k$.
Since acyclic graphs are exactly forests, Theorem~\ref{ch++} can be restated as follows:

 \begin{theorem}\label{kky}
 For $k \in \mathbb Z^+$,
let $G$ be a simple graph in $\mathcal{D}_k$.
Then $G$ has no $k$ disjoint cycles if and only if  one of the following holds:
\begin{enumerate}
\item[$(\alpha)$] $|G|\leq 3k-1$;
\item[$(\beta)$] $k=1$ and $G$ is a forest with no isolated vertices;
\item[$(\gamma)$] $k=2$ and $G$ is a wheel;
\item[$(\delta)$] $\alpha(G)= n-2k+1$; or
\item[$(\epsilon)$] $k>1$ is odd and $G={Y_{k,k}}$.
\end{enumerate}
  \end{theorem}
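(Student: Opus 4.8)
The plan is to derive the statement from Theorem~\ref{ch++} together with the elementary fact that $k$ disjoint cycles in a simple graph occupy at least $3k$ vertices. Write $n=|G|$; since $G$ is simple, $\ms(G)=\delta(G)$, so ``$G\in\mathcal D_k$'' means exactly ``$\delta(G)\geq 2k-1$''.

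First I would treat $k=1$ separately, since this case lies outside the hypotheses of Theorem~\ref{ch++}. A simple graph has a cycle iff it is not a forest, and $G\in\mathcal D_1$ iff $G$ has no isolated vertex; hence, if $G\in\mathcal D_1$ has no cycle then $G$ is a forest with no isolated vertex, i.e.\ $(\beta)$ holds. Conversely, when $k=1$ each of $(\alpha)$--$(\epsilon)$ forces $G$ to be acyclic: $(\beta)$ trivially; $(\alpha)$ because then $|G|\le 2$; $(\delta)$ because $\alpha(G)=|G|-1$ makes $G$ a star; and $(\gamma),(\epsilon)$ vacuously. This settles $k=1$.

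Now suppose $k\geq 2$, and split on $n$. If $n\leq 3k-1$, then $G$ has no $k$ disjoint cycles and $(\alpha)$ holds, so both sides of the asserted equivalence are true. If $n\geq 3k$, then Theorem~\ref{ch++} applies, and negating its three conditions shows that $G$ has no $k$ disjoint cycles iff $\alpha(G)\geq n-2k+1$, or [$k$ is odd, $n=3k$, and $G=Y_{k,k}$], or [$k=2$ and $G$ is a wheel]. I would then match each disjunct with one of the listed conditions. The first, combined with the bound $\alpha(G)\leq n-2k+1$ valid for every $G\in\mathcal D_k$, is exactly the equality $(\delta)$. The second is exactly $(\epsilon)$: the graph $Y_{k,k}$ has $k+k+k=3k$ vertices, so ``$n=3k$'' is automatic, and since $k\geq 2$, ``$k$ odd'' is the same as ``$k>1$ odd''. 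The third is $(\gamma)$. Finally, in this subcase $(\alpha)$ fails (because $n\geq 3k$) and $(\beta)$ fails (because $k\geq 2$), so ``one of $(\alpha)$--$(\epsilon)$'' reduces to ``$(\gamma)$, $(\delta)$, or $(\epsilon)$''. Combining the two subcases proves the theorem for $k\geq 2$.

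The argument is essentially bookkeeping, so I do not anticipate a genuine obstacle. The points that need a little care are: handling $k=1$ by hand, as it is not covered by Theorem~\ref{ch++}; using the bound $\alpha(G)\leq n-2k+1$ for $G\in\mathcal D_k$ to convert the inequality $\alpha(G)\geq n-2k+1$ into the \emph{equality} demanded by $(\delta)$; and noticing that $G=Y_{k,k}$ already forces $n=3k$, so that the clause ``$n=3k$'' appearing in Theorem~\ref{ch++} can be absorbed into $(\epsilon)$.
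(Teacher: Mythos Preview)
Your proposal is correct and follows exactly the approach the paper intends: the paper presents Theorem~\ref{kky} simply as a restatement of Theorem~\ref{ch++} (adding the $k=1$ case by the remark that acyclic graphs are forests), and your argument is the careful unpacking of that restatement. The only difference is level of detail---you explicitly verify the $k=1$ case and the matching of $(\gamma),(\delta),(\epsilon)$ with the negations of (i)--(iii), whereas the paper leaves this implicit.
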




 Dirac~\cite{Di}  described all multigraphs in $\mathcal D_2$ that do not have two disjoint cycles:

 \begin{theorem}[\cite{Di}]\label{dt} 
 Let $G$ be a $3$-connected multigraph. Then $G$ has no two disjoint cycles if and only if one of the following holds:
 \begin{enumerate}
\item[(A)]  $\widetilde{G}=K_4$ and the strong edges in $G$ form either a star (possibly empty) or
a $3$-cycle;
\item[(B)] $G=K_5$;
\item[(C)]  $\widetilde{G}=K_5-e$ and the strong edges in $G$ are not incident to the ends of $e$;
\item[(D)] $\widetilde{G}$ is a wheel, where some spokes could be strong edges; or
\item[(E)] $G$ is obtained from $K_{3,|G|-3}$ by adding non-loop edges between the vertices of the (first) 3-class.
\end{enumerate}
  \end{theorem}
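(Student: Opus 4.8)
The plan is to derive this from Theorem~\ref{kky} (the case $k=2$ of the authors' refinement of Corr\'adi--Hajnal) together with a short analysis of how parallel edges create cycles. Let $G$ be a $3$-connected multigraph; then $|V(G)|\geq 4$, the underlying simple graph $\widetilde G$ is $3$-connected, and $\widetilde G\in\mathcal D_2$. First I would dispose of loops: if $G$ has a loop then $|V_1|\geq 1$, so $|V(G)|+2|V_1|+\alpha'(F)\geq 6$, and Corollary~\ref{dm1} produces two disjoint cycles; since no graph in (A)--(E) has a loop, we may assume $G$ is loopless, so $V_1=\emptyset$ and $F$ is exactly the set of pairs of $G$ joined by at least two parallel edges (the strong edges, which are the digons of $G$).

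The key step is a description of when a loopless multigraph has two disjoint cycles: every cycle of $G$ is either a digon (a strong edge) or has length $\geq 3$, in which case it projects onto a cycle of $\widetilde G$ on the same vertices, and conversely every cycle of $\widetilde G$ lifts to $G$. Hence $G$ has two disjoint cycles if and only if one of the following holds: $\widetilde G$ has two disjoint cycles; some strong edge $xy$ has $\widetilde G-x-y$ not a forest; or $F$ has a matching of size $2$. Equivalently, $G$ has no two disjoint cycles iff (i) $\widetilde G$ has no two disjoint cycles, (ii) $\widetilde G-x-y$ is a forest for every strong edge $xy$, and (iii) $\alpha'(F)\leq 1$ (so, ignoring isolated vertices, $F$ is a star or a triangle). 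By Theorem~\ref{kky} applied to $\widetilde G\in\mathcal D_2$ with $k=2$, condition (i) holds iff $|\widetilde G|\leq 5$, or $\widetilde G$ is a wheel, or $\widetilde G$ is extremal, i.e.\ $\alpha(\widetilde G)=|\widetilde G|-3$. It then remains, in each of these three cases, to identify the strong-edge sets compatible with (ii) and (iii) and to check that the outcome is precisely (A)--(E).

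For the case analysis I would argue as follows. If $|\widetilde G|\leq 5$, then since $\ms(\widetilde G)\geq 3$ and $\widetilde G$ is $3$-connected, $\widetilde G$ is $K_4$, $K_5$, $K_5-e$, or $K_5$ minus two disjoint edges; here $\widetilde G=K_4$ makes (ii) vacuous and, with (iii), gives (A); $\widetilde G=K_5$ forces $F=\emptyset$ (deleting any two vertices leaves a triangle), giving (B); $\widetilde G=K_5-e$ forces the strong edges to avoid the ends of $e$ (otherwise a deletion leaves a triangle), giving (C); and $K_5$ minus two disjoint edges is extremal and is absorbed into case (E). If $\widetilde G$ is a wheel with hub $h$ and at least four rim vertices, then a rim edge cannot be strong (deleting its ends leaves a fan, hence a triangle) while any set of spokes may be strong (deleting the ends of a spoke leaves a path, and all spokes meet at $h$), giving (D). If $\widetilde G$ is extremal, let $I$ be a big set and $\{a,b,c\}=V(\widetilde G)\setminus I$; since $\ms(\widetilde G)\geq 3$, each vertex of $I$ is adjacent to $a,b,c$, so $\widetilde G$ is $K_{3,|G|-3}$ together with some edges inside $\{a,b,c\}$; any edge inside $\{a,b,c\}$ may be strong, and a strong edge incident to $I$ violates (ii) when $|G|\geq 6$ (the deletion contains $K_{2,|G|-4}\supseteq C_4$) and for $|G|=5$ forces a re-description with a different big set, so this case yields (E). Finally one verifies the converse --- that each graph in (A)--(E) is $3$-connected and satisfies (i)--(iii) (for (E) this constrains $|G|$ and the chosen edges inside $\{a,b,c\}$) --- which is routine. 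The main obstacle is this finite case analysis: correctly enumerating the small $3$-connected simple graphs with minimum simple degree $\geq 3$, reading off the admissible strong edges from (ii)--(iii) in each, and reconciling the considerable overlap among (A)--(E) (for example $K_4$ is a wheel, $W_5$ is extremal, and $K_5-e$ and $K_5$ minus two disjoint edges are extremal) so that the union over the three cases is exactly the list (A)--(E).
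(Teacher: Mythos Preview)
The paper does not prove Theorem~\ref{dt}: it is quoted from Dirac~\cite{Di} as a known result (immediately followed by Lov\'asz's stronger Theorem~\ref{lovasz}), and when the paper needs the case $k=2$ in the proof of Theorem~\ref{dm} it invokes Lov\'asz's theorem directly rather than re-deriving Dirac's. So there is no proof in the paper to compare your argument against.

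That said, your derivation is sound in outline. The equivalence you isolate---$G$ has no two disjoint cycles iff (i) $\widetilde G$ has none, (ii) $\widetilde G-x-y$ is a forest for every strong edge $xy$, and (iii) $\alpha'(F)\le1$---is correct, and applying Theorem~\ref{kky} with $k=2$ to (i) does split the problem into the subcases $|\widetilde G|\le 5$, $\widetilde G$ a wheel, and $\widetilde G$ extremal, which you then reconcile with (A)--(E). Two remarks. First, your handling of the overlap at $|G|=5$ is a little loose: for $\widetilde G=W_4$ with all four spokes strong there is \emph{no} choice of big set putting every strong edge inside the $3$-class, so the clause ``for $|G|=5$ forces a re-description with a different big set'' does not by itself finish the extremal analysis; you need to note that this $G$ is still caught by your wheel case and lands in (D). Second, and more importantly, you are deducing a 1963 theorem from the authors' later Theorem~\ref{kky}, whose proof lives in~\cite{KK2}; you should verify that that proof does not itself rely on Dirac's or Lov\'asz's description of the $k=2$ case, or your argument is circular. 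A cleaner route, and the one implicit in this paper, is to start from Lov\'asz's Theorem~\ref{lovasz} and intersect its four outcomes with the hypothesis of $3$-connectedness.
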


Going further, Lov\'asz~\cite{Lo} described \emph{all} multigraphs with no two disjoint cycles. He observed that 
it suffices
to describe such multigraphs with minimum (ordinary) degree at least $3$, and proved the following:

\begin{theorem}[\cite{Lo}]\label{lovasz}
Let $G$ be a multigraph  with $\delta(G) \geq 3$. Then $G$ has no two disjoint cycles if and only if  $G$ is one of the following: 
\begin{enumerate}
\item[$(1)$] $K_5$;
\item[$(2)$] A wheel, where some spokes could be strong edges;
\item[$(3)$] $K_{3,|G|-3}$ together with a loopless multigraph on the vertices of the (first) 3-class; or
\item[$(4)$]
a forest $F$ and a vertex $x$ with possibly some loops at $x$ and some
 edges linking $x$ to $F$.
 \end{enumerate}
\end{theorem}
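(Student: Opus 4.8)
The plan is to reduce Lov\'asz's Theorem~\ref{lovasz} to Dirac's Theorem~\ref{dt}, which handles the $3$-connected case, by a standard decomposition argument along small cutsets. Assume $G$ is a loopless-reduced counterexample candidate: a multigraph with $\delta(G)\ge 3$ and no two disjoint cycles. If $G$ happens to be $3$-connected, Theorem~\ref{dt} applies and each of its cases (A)--(E) is easily checked to land in one of (1)--(4) of Theorem~\ref{lovasz} (e.g. case (A) with $\widetilde G=K_4$ and a triangle of strong edges is the wheel $W_3$; case (C) with $K_5-e$ is a wheel with a strong spoke; cases (D), (E) are verbatim cases (2), (3)). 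So the work is entirely about propagating the conclusion across $1$- and $2$-cuts.

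First I would handle low connectivity. If $G$ is disconnected or has a cut vertex, then at most one block can contain a cycle (otherwise two blocks give two disjoint cycles), so $G$ is a forest plus one ``cyclic'' block $B$ hanging off a vertex; if $B$ itself is trivial, $G$ is a forest, which has no two disjoint cycles only in the degenerate way already covered by replacing $B$ by a vertex with loops — this is exactly case (4). If $B$ is nontrivial, apply the theorem recursively to $B$ (which is $2$-connected), and check that attaching trees to a single vertex of a $K_5$, wheel, or $K_{3,\cdot}$ would create two disjoint cycles unless the tree part is empty; the only survivor is the forest-plus-loops picture of (4) again. So from here on we may assume $G$ is $2$-connected but not $3$-connected, and pick a $2$-cut $\{x,y\}$.

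Next, the $2$-cut analysis. Write $G - \{x,y\} = A_1 \cup A_2 \cup \cdots$; let $H_i = G[A_i \cup \{x,y\}]$. Since a cycle inside $H_i - \{x,y\}$ plus a cycle inside $H_j - \{x,y\}$ for $i\ne j$ would be disjoint, at most one side, say $A_1$, can contain a cycle avoiding $\{x,y\}$; every other $A_j$ induces a forest, and $2$-connectivity forces each such $A_j$ to be a single $x$--$y$ path (any vertex of degree $\ge 3$ inside a forest-side $A_j$ would need a third connection, contradicting that $\{x,y\}$ separates, or would create a cycle). Several such $x$--$y$ paths of length $\ge 2$ through distinct $A_j$'s would give a cycle through $x,y$ together with a cycle in $A_1$ — two disjoint cycles — so there are very few of them; one then argues there is essentially one nontrivial side $H_1$ together with at most a couple of short $x$--$y$ connectors, or $x$ and $y$ are themselves adjacent (possibly by a strong edge). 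In each subcase we contract/delete the forest-like side down to a single edge or strong edge between $x$ and $y$, obtaining a smaller multigraph $G'$ that still has $\delta \ge 3$ and no two disjoint cycles, and apply induction to $G'$. Then we lift: a wheel with a subdivided rim, or a $K_{3,m}$ with a subdivided side, or a $K_5$ with a subdivided edge, must be checked against the ``no two disjoint cycles'' condition, and one verifies these subdivisions are not allowed except in the forms already listed — e.g. subdividing a rim edge of a wheel keeps it a ``wheel'' in the loose sense allowed (the hub still sees everything), subdividing a spoke or a $K_5$ edge introduces a second cycle, etc. The one genuinely new configuration a $2$-cut can produce — two internally disjoint $x$--$y$ paths with extra structure — is exactly what forces the $K_{3,|G|-3}$ family and is handled there.

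The main obstacle is the bookkeeping in the $2$-cut case: one must show that after separating along $\{x,y\}$ the ``small'' sides are so restricted (single edges, or at most one path, or a controlled bipartite-type piece) that induction on a reduced graph $G'$ actually applies and that the reduction is reversible without creating two disjoint cycles. The delicate points are (i) keeping $\delta(G')\ge 3$ after contracting a side to an edge (a degree-$3$ vertex of $G$ lying in the contracted side could drop below $3$, so one has to argue such vertices don't exist or that the contraction is chosen to avoid this), and (ii) ruling out the ``two long $x$--$y$ paths plus a cyclic side'' configuration, which is the heart of why the list is finite up to the forest and $K_{3,m}$ families. Everything else is a finite check of how the five exceptional configurations of Theorem~\ref{dt} sit inside the four families of Theorem~\ref{lovasz} and how small subdivisions and pendant trees interact with them.
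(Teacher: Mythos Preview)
The paper does not prove Theorem~\ref{lovasz}; it is quoted from Lov\'asz's 1965 paper~\cite{Lo} and used as a black box (in the sufficiency direction of Theorem~\ref{dm} and in the $k=2$ case of the necessity direction). So there is no ``paper's own proof'' to compare your proposal against.

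As for the proposal itself, the overall architecture---handle the $3$-connected case via Theorem~\ref{dt} and propagate across small cuts---is reasonable, but what you have written is an outline with real gaps, not a proof. Two concrete issues: first, your matching of Dirac's cases to Lov\'asz's cases is off. In case~(A) when the strong edges of $K_4$ form a triangle, the graph is \emph{not} a wheel with strong spokes (the strong edges are rim edges, not spokes); it is an instance of case~(3) with the triangle as the $3$-class. Likewise case~(C), $K_5-e$ with strong edges avoiding the ends of $e$, is not a wheel at all---it is again case~(3), with the three non-ends of $e$ as the $3$-class. Second, your low-connectivity argument is too loose. In the cut-vertex case you assert that cycles in different blocks are disjoint, but both could pass through the cut vertex; you need to argue one block contains a cycle avoiding it. In the $2$-cut case you yourself flag that keeping $\delta\ge 3$ after contraction is delicate, and you never actually carry out the case analysis that forces the reduced graph back into the list; ``one verifies these subdivisions are not allowed'' is exactly the content of the theorem and cannot be left as an exercise. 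Finally, the phrase ``loopless-reduced counterexample candidate'' is unclear: case~(4) explicitly allows loops, so you must say precisely how you dispose of loops (the right move is: a loop at $x$ forces $G-x$ to be a forest, which is case~(4) immediately).
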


By Corollary~\ref{dm1}, in order to describe the multigraphs in $\mathcal{D}_k$ not containing $k$ disjoint cycles,
it is enough to describe such multigraphs with no loops.
Our main result is the following:

 \begin{theorem}\label{dm}
 Let $k\geq 2$ and $n\geq k$ be integers.
 Let $G$ be an $n$-vertex multigraph in $\mathcal D_k$ with no loops.
Set $F=F(G)$, $\alpha'=\alpha'(F)$, and $k'=k-\alpha'$.
Then $G$ does not contain $k$ disjoint cycles if and only if one of the following holds: (see Figure~\ref{fig:dm})
\begin{enumerate}[label=(\alph*)]
\item  $ n+\alpha'<3k$;
\item $|F|=2\alpha'$ (i.e., $F$ has a perfect matching) and either\\
 (i)
$k'$ is odd and $G-F=Y_{k',k'}$, or\\
(ii)  $k'=2<k$ and $G-F$ is a wheel with  $5$ spokes;
\item $G$ is extremal  and either\\
(i) some big set is not incident to any strong edge, or\\
(ii) for
some two distinct big sets $I_j$ and $I_{j'}$, all strong edges intersecting $I_j\cup I_{j'}$ have a common vertex
outside of $I_j\cup I_{j'}$;
\item $n=2\alpha'+3k'$, $k'$ is odd, and $F$ has a superstar $S=\{v_0,\ldots,v_s\}$ with center $v_0$ such that
either\\
(i)
$G-(F-S+v_0)=Y_{k'+1,k'}$, or\\
(ii) $s=2$, $v_1v_2\in E(G)$,
$G-F=Y_{k'-1,k'}$ and $G$ has no edges between $\{v_1,v_2\}$ and the set $X_0$ in $G-F$;
\item $k=2$ and $G$ is a wheel, where some spokes could be strong edges;
\item $k'=2$, $|F|=2\alpha'+1=n-5$, and $G-F=C_5$.
\end{enumerate}
 \end{theorem}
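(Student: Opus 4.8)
The plan is to reduce Theorem~\ref{dm} to Theorem~\ref{kky} by ``peeling off'' the strong edges. Given a loopless $G\in\mathcal D_k$ with $F=F(G)$ and $\alpha'=\alpha'(F)$, the idea is that $k$ disjoint cycles in $G$ can often be obtained by using $\alpha'$ digons on a maximum matching of $F$ and then finding $k'=k-\alpha'$ disjoint cycles in what remains. So the first step is to make precise the relationship between ``$G$ has $k$ disjoint cycles'' and ``$G-F$ (or a slight modification of it) has $k'$ disjoint cycles.'' One direction is immediate: if $G-F$ has $k'$ disjoint cycles and these avoid the matched vertices of $F$, then together with the $\alpha'$ digons we get $k$ disjoint cycles in $G$. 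The subtlety is that the $k'$ cycles may be forced to use vertices covered by the maximum matching $M$ of $F$; here the Gallai--Edmonds structure of $F$ and an exchange argument (swapping an $M$-edge for another cycle, or re-routing) should let us reduce to a clean statement. I would isolate this as a lemma: \emph{$G$ has $k$ disjoint cycles if and only if $G^\ast$ has $k'$ disjoint cycles}, where $G^\ast$ is an explicitly defined loopless simple-ish graph on roughly $n-2\alpha'$ vertices obtained by deleting a maximum matching's worth of $F$-structure; the cases of the theorem then correspond exactly to the cases $(\alpha)$--$(\epsilon)$ of Theorem~\ref{kky} applied to $G^\ast$, plus bookkeeping for how strong edges can be reattached.

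With such a reduction in hand, the second step is the case analysis. Case $(\alpha)$ of Theorem~\ref{kky}, $|G^\ast|\le 3k'-1$, should translate into case (a), $n+\alpha'<3k$, via $|G^\ast|=n-2\alpha'$ and $3k'-1=3k-3\alpha'-1$; one must check the arithmetic carefully and confirm that the membership $G^\ast\in\mathcal D_{k'}$ is preserved (or handle the exceptional small cases by hand). Case $(\delta)$, $\alpha(G^\ast)=|G^\ast|-2k'+1$, should yield the extremal cases (c) and (d): when a big set of $G^\ast$ is incident to no strong edge we land in (c)(i); when strong edges touch it, the structure of how $F$ meets two big sets (using \eqref{l4}, the disjointness of big sets) gives (c)(ii) or, when the matching $M$ already saturates a big set, the ``superstar'' configurations of (d). Case $(\epsilon)$, $G^\ast=Y_{k',k'}$ with $k'$ odd, gives (b)(i) when $F$ has a perfect matching and the more delicate (d)(ii) when $F$ has a near-perfect matching leaving a superstar; here one checks that the two extra vertices $v_1,v_2$ must attach as a digon to one clique of $Y$ and not to $X_0$. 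Cases $(\beta)$ and $(\gamma)$ of Theorem~\ref{kky} (forests; wheels, $k'=2$) produce the wheel cases (b)(ii), (e), and the $C_5$ case (f), again after tracking which spokes/edges may be strong and invoking Corollary~\ref{dm1} to dispose of the loopful case.

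The third step is a direct verification that each configuration (a)--(f) genuinely fails to have $k$ disjoint cycles --- this is the easy ``only if'' direction and is essentially a vertex/edge count or an appeal to the corresponding statement for the underlying simple structure ($Y_{k',k'}$, wheels, $C_5$ each famously lack the requisite number of disjoint cycles), combined with the observation that $\alpha'$ of the $k$ cycles must be ``paid for'' by digons when $|F|$ is small.

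The main obstacle I anticipate is the reduction lemma, specifically controlling the interaction between a maximum matching of $F$ and the cycles we wish to find in $G-F$: a priori the cheapest $k'$ cycles in $G-F$ might be forced onto matched vertices, and then we do not immediately recover $k$ disjoint cycles. Handling this requires an augmenting/exchange argument on $F$ together with the Gallai--Edmonds decomposition, and it is precisely this analysis that forces the superstar configurations in case (d) to appear --- so getting the lemma exactly right (with the correct definition of $G^\ast$ and the correct list of exceptions) is both the crux and the source of the more exotic cases in the statement. A secondary nuisance will be the numerous small-parameter boundary cases ($k'=2$, $n$ close to $k$, $F$ nearly perfect), each of which must be checked against the hypothesis $G\in\mathcal D_k$ by hand.
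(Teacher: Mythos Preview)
Your overall strategy---peel off a maximum matching $M$ of $F$, set $G'=G-V(M)$, note $G'\in\mathcal D_{k'}$, and apply Theorem~\ref{kky}---is exactly what the paper does. But several of the details you sketch are wrong or missing, and they matter.

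First, your case map is off. In the paper, case $(\beta)$ (so $k'=1$ and $G'$ a forest) does \emph{not} produce the wheel outcomes; it produces (d)(i) via a star analysis. Case $(\delta)$ produces only (c)(i) and (c)(ii), never (d). The superstar configurations (d)(i) and (d)(ii) come from $(\beta)$ and $(\epsilon)$, not from $(\delta)$. And the genuine wheel case (e) arises only when $k=2$, which the paper disposes of at the outset using Lov\'asz's Theorem~\ref{lovasz} rather than through the reduction; your plan should split off $k=2$ before invoking Theorem~\ref{kky}.

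Second, you are missing the key device that makes the extremal case $(\delta)$ work: the paper does not take an arbitrary maximum matching $M$, but one with $\alpha(G-V(M))$ \emph{minimum}. This choice is what lets you, when (c)(i) fails, swap a matching edge to produce a second big set $J'$ disjoint from the first (and then a third), forcing (c)(ii). Without this extremal choice the exchange argument does not close. No Gallai--Edmonds is used anywhere; the only matching manipulations are single-edge swaps $M\mapsto M-yz+xy$ together with the observation that the new $G''$ must again be extremal.

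Third, the clean ``$G$ has $k$ disjoint cycles iff $G^\ast$ has $k'$ disjoint cycles'' lemma you hope for does not exist in a usable form---indeed its failure is precisely why (d) and (f) appear. The paper never isolates such a lemma; instead, inside each case $(\beta),(\gamma),(\epsilon)$ it uses degree counts (every rim vertex of the wheel, every $x'\in X_1\cup X_2$ of $Y_{k',k'}$, every leaf of the forest is adjacent to all of $W$) to manufacture the missing cycles directly whenever the stated exceptional structure fails. Your plan will work once you abandon the iff-lemma and do this hands-on case analysis with the corrected case map and the minimizing choice of $M$.
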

 
\begin{figure}[ht]\centering
\captionsetup[subfigure]{labelformat=empty}
\begin{subfigure}[b]{.3\textwidth}
\centering
\begin{tikzpicture}
\draw node[shape=circle,fill=black,inner sep=0pt,minimum size=2mm,draw] (c){};
\foreach \x in {0,1,2}{
\draw (90+120*\x:1cm) node[shape=circle,fill=black,inner sep=0pt,minimum size=2mm,draw] (t\x){};
\draw (t\x)--(c);
}
\draw[thick, double distance=2pt] (t1)--(t2)--(t0)--(t1);
\end{tikzpicture}\caption{(a): $\alpha'=1$, $k=2$}
\end{subfigure}
\begin{subfigure}[b]{.3\textwidth}
\centering
\begin{tikzpicture}
\foreach \x in {0,1,2}{\foreach \y in {0,1,2}
{
\draw (\x,.75*\y) node[shape=circle,fill=black,inner sep=0pt,minimum size=2mm,draw] (d\x\y){};
}}
\draw (d12)+(0,.25) node (d0){};
\foreach \y in {0,2}{\foreach \x in {0,1,2}{\foreach \z in {0,1,2}{
\draw (d\x1)--(d\z\y);
}}
\draw (d0\y)--(d1\y)--(d2\y);}
\draw (d00) to[out=-30, in=210] (d20);
\draw (d02) to[out=30, in=-210] (d22);
\draw (2.75,.25) node[shape=circle,fill=black,inner sep=0pt,minimum size=2mm,draw] (d1){};
\draw (2.75,1.25) node[shape=circle,fill=black,inner sep=0pt,minimum size=2mm,draw] (d2){};
\draw[thick, double distance=2pt] (d1)--(d2);
\draw (3.5,.25) node[shape=circle,fill=black,inner sep=0pt,minimum size=2mm,draw] (d3){};
\draw (3.5,1.25) node[shape=circle,fill=black,inner sep=0pt,minimum size=2mm,draw] (d4){};
\draw[thick, double distance=2pt] (d3)--(d4);
\end{tikzpicture}\caption{(b)(i): $\alpha'=2$, $k=5$}
\end{subfigure}
\begin{subfigure}[b]{.3\textwidth}
\centering
\begin{tikzpicture}
\draw (0,0) node[shape=circle,fill=black,inner sep=0pt,minimum size=2mm,draw] (c){};
\foreach \x in {0,...,4}{
\draw (18+72*\x:1cm) node[shape=circle,fill=black,inner sep=0pt,minimum size=2mm,draw] (d\x){}; 
\draw (d\x)--(c);}
\foreach \x in {0,...,3}{
\setcounter{next}{\x}
\addtocounter{next}{1}
\draw (d\arabic{next}) --(d\x);}
\draw (d4)--(d0);
\draw (1.74,-.5) node[shape=circle,fill=black,inner sep=0pt,minimum size=2mm,draw] (d1){};
\draw (1.74,.5) node[shape=circle,fill=black,inner sep=0pt,minimum size=2mm,draw] (d2){};
\draw[thick, double distance=2pt] (d1)--(d2);
\draw (2.5,-.5) node[shape=circle,fill=black,inner sep=0pt,minimum size=2mm,draw] (d3){};
\draw (2.5,.5) node[shape=circle,fill=black,inner sep=0pt,minimum size=2mm,draw] (d4){};
\draw[thick, double distance=2pt] (d3)--(d4);
\end{tikzpicture}\caption{(b)(ii): $\alpha'=2$,  $k=4$}
\end{subfigure}
\vskip 7mm
\begin{subfigure}[b]{.3\textwidth}
\centering
\begin{tikzpicture}
\draw (0,0) node[shape=ellipse, minimum width=1cm, minimum height=2.5cm, draw] (le){};
\draw (2,0) node[shape=ellipse, minimum width=1cm, minimum height=2.5cm, draw] (re){};
\foreach \y in {0,...,3}{
\foreach \x in {0,2}{
\draw (\x,.5*\y-.75) node[shape=circle,fill=black,inner sep=0pt,minimum size=2mm,draw] (n\x\y){};}}
\draw (1,1.75)node[shape=circle,fill=black,inner sep=0pt,minimum size=2mm,draw](v){};
\draw[thick, double distance=2pt] (n22)--(v);
\draw[thick, double distance=2pt] (n23)--(v);
\draw[thick, double distance=2pt] (n03)--(v);
\draw (n01)+(-1,0) node{$I_j$};
\draw (n21)+(1,0) node{$I_{j'}$};
\end{tikzpicture}\caption{(c)(ii)}
\end{subfigure}
\begin{subfigure}[b]{.3\textwidth}
\centering
\begin{tikzpicture}
\foreach \x in {-2,...,2}{
\draw (\x,0) node[shape=circle,fill=black,inner sep=0pt,minimum size=2mm,draw] (m\x) {};
}
\foreach \x in {0,1,2}{
\draw (\x-2,1.25) node[shape=circle,fill=black,inner sep=0pt,minimum size=2mm,draw] (u\x) {};
\draw (\x-2,-1.25) node[shape=circle,fill=black,inner sep=0pt,minimum size=2mm,draw] (l\x) {};
}
\draw (1,1) node[shape=circle,fill=black,inner sep=0pt,minimum size=2mm,draw, label=above:$v_0$] (v0){};
\draw (2,1) node[shape=circle,fill=black,inner sep=0pt,minimum size=2mm,draw] (v3){};
\draw[thick, double distance=2pt] (m2)--(v3);
\draw[thick, double distance=2pt] (m1)--(v0);
\draw[thick, double distance=2pt] (m0)--(v0);
\draw (m0) node[inner sep=0.7mm, label= right:$v_1$] (v1){};
\draw (m1) node[inner sep=0.7mm,label= right:$v_2$] (v2){};
\foreach \m in {-2,...,1}{\foreach \u in {0,1,2}{
\draw[gray] (l\u)--(m\m)--(u\u);}}
\draw[gray] (u0)--(u1)--(u2);
\draw[gray] (l0)--(l1)--(l2);
\draw[gray] (l0) to[out=-45, in=225] (l2);
\draw[gray] (u0) to[out=45, in=-225] (u2);
\end{tikzpicture}\caption{(d)(i): $\alpha'=2$,  $k=5$}
\end{subfigure}
\begin{subfigure}[b]{.3\textwidth}
\centering
\begin{tikzpicture}
\foreach \x in {-2,...,2}{
\draw (\x,0) node[shape=circle,fill=black,inner sep=0pt,minimum size=2mm,draw] (m\x) {};
}
\foreach \x in {0,1,2}{
\draw (\x-2.5,1.25) node[shape=circle,fill=black,inner sep=0pt,minimum size=2mm,draw] (u\x) {};
\draw (\x-2.5,-1.25) node[shape=circle,fill=black,inner sep=0pt,minimum size=2mm,draw] (l\x) {};
}
\draw (0.5,1) node[shape=circle,fill=black,inner sep=0pt,minimum size=2mm,draw, label=above:$v_0$] (v0){};
\draw (2,1) node[shape=circle,fill=black,inner sep=0pt,minimum size=2mm,draw] (v3){};
\draw[thick, double distance=2pt] (m0)--(v0);
\draw[thick, double distance=2pt] (m1)--(v0);
\draw[thick, double distance=2pt] (m2)--(v3);
\draw (m0) node[inner sep=1mm, label=below:$v_1$] (v1){};
\draw (m1) node[inner sep=1mm, label=below:$v_2$] (v2){};
\draw (v1)--(v2);
\foreach \m in {-2,-1}{\foreach \u in {0,1,2}{
\draw[gray] (l\u)--(m\m)--(u\u);}}
\draw[gray] (u0)--(u1)--(u2);
\draw[gray] (l0)--(l1)--(l2);
\draw[gray] (l0) to[out=-45, in=225] (l2);
\draw[gray] (u0) to[out=45, in=-225] (u2);
\end{tikzpicture}\caption{(d)(ii): $\alpha'=2$,  $k=5$}
\end{subfigure}
\vskip 7mm
\begin{subfigure}[b]{.3\textwidth}
\centering
\begin{tikzpicture}
\draw (0,0) node[shape=circle,fill=black,inner sep=0pt,minimum size=2mm,draw] (c) {};
\foreach \x in {0,...,5}
{\draw (c)+(60*\x:1.25cm) node[shape=circle,fill=black,inner sep=0pt,minimum size=2mm,draw] (d\x){};
\draw[thick, double distance=2pt] (c)--(d\x);}
\foreach \x in {0,...,4}{
\setcounter{next}{\x}
\addtocounter{next}{1}
\draw (d\arabic{next}) --(d\x);}
\draw (d5)--(d0);
\end{tikzpicture}\caption{(e): $\alpha'=1$,  $k=2$}
\end{subfigure}
\begin{subfigure}[b]{.3\textwidth}
\centering
\begin{tikzpicture}
\foreach \x in {0,...,4}
{\draw (90+72*\x:.75cm) node[shape=circle,fill=black,inner sep=0pt,minimum size=2mm,draw] (d\x){};
}
\foreach \x in {0,...,3}{
\setcounter{next}{\x}
\addtocounter{next}{1}
\draw (d\arabic{next}) --(d\x);}
\draw (d4)--(d0);
\draw (2,-1) node(f) {};
\draw (f)+(-2,0) node(f1) {};
\draw (f1)+(1.5,.5) node[shape=circle,fill=black,inner sep=0pt,minimum size=2mm,draw] (e1){};
\draw (f1)+(1.5,1.5) node[shape=circle,fill=black,inner sep=0pt,minimum size=2mm,draw] (e2){};;
\draw[thick, double distance=2pt] (e1)--(e2);
\draw (f1)+(2.5,.5) node[shape=circle,fill=black,inner sep=0pt,minimum size=2mm,draw] (e3){};
\draw (f1)+(2.5,1.5) node[shape=circle,fill=black,inner sep=0pt,minimum size=2mm,draw] (e4){};
\draw[thick, double distance=2pt] (e3)--(e4);
\draw (f1)+(3.25,1.25) node[shape=circle,fill=black,inner sep=0pt,minimum size=2mm,draw] (e5){};
\draw[thick, double distance=2pt] (e3)--(e5);
\end{tikzpicture}\caption{(f): $\alpha'=2$, $k=4$}
\end{subfigure}
\caption{Examples of Subgraphs of Multigraphs Listed in Theorem~\ref{dm}}\label{fig:dm}
\end{figure}
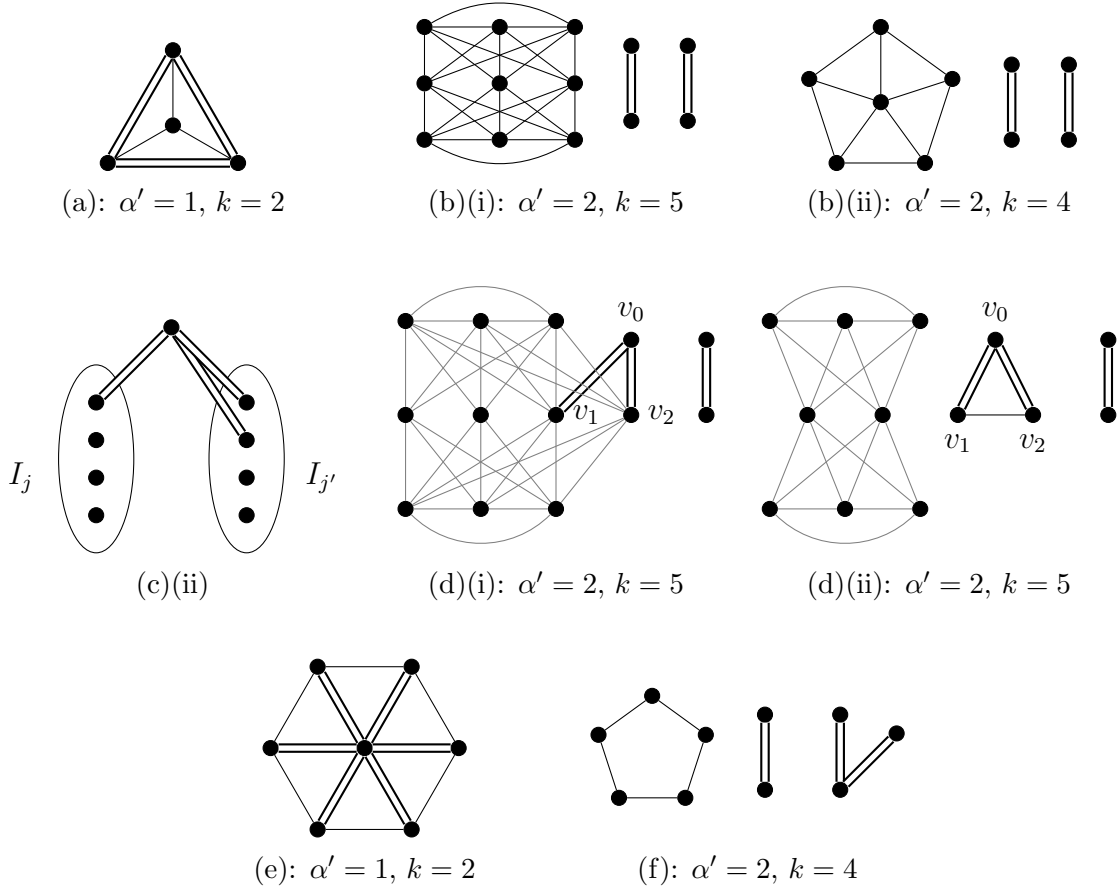
The six infinite classes of multigraphs described in Theorem~\ref{dm} are exactly the family of multigraphs in $\mathcal D_k$ with no $k$ disjoint cycles. So, the $(2k-1)$-connected multigraphs with no $k$ disjoint cycles are exactly the $(2k-1)$-connected multigraphs that are in one of these classes. For any multigraph $G$, we can  check in polynomial time whether $G\in\mathcal{D}_k$ and whether $G$ is $(2k-1)$-connected.
If $G \in \mathcal D_k$,
we can
check in polynomial time whether any of the conditions (a)--(f)  hold for $G$. Note that
to determine the extremality of $G$ we 
need only check whether $G$ has 
an independent set of size $n-2k+1$. Such a set will be the complement of $N(v)$ for some vertex $v$ with 
$\s(v)=2k-1$; so all big sets can be found in polynomial time.

Note if $G$ is $(2k-1)$-connected, and (b)(i), (d)(i), or d(ii) holds, then $k' \le 1$.


\section{Proof of sufficiency in  Theorem~\ref{dm} }
 Suppose $G$ 
  has a set $\mathcal C$ of $k$ disjoint cycles. Our task is to show that each of (a)--(f) fails.  Theorem~\ref{lovasz}, case (2) implies (e) fails. Let $M\subseteq\mathcal C$ be the set of strong edges  ($2$-cycles) in $\mathcal C$, $h=| M|$, and $W=V(M)$. Now $h\le\alpha'$; so $n\ge 2h+3(k-h)\ge3k-\alpha'$. Thus (a) fails. If  $n=3k-\alpha'$ as in cases (b), (d) and (f), then $h=\alpha'$ and $G'=G-W$ is a simple graph of minimum degree at least $2k'-1$ with $3k'$ vertices and $k'$ cycles. By Theorem~\ref{ch++} all of (i)--(iii) hold for $G'$.
  In case (b),  $G'=G-F$; so (ii) and (iii) imply (b)(i) and (b)(ii)  fail. In case (f), $G'=G-(F-v)= v\vee C_5$ for some vertex  $v\in F$. So (iii) implies (f) fails. In case (d), $M$ consists of a strong perfect matching in $F-S$ together with a strong edge $v_0v\in S$. 
   If $G-(F-S+v_0)=Y_{k'+1,k'}$ then either $\alpha(G')=k'+1$ or $G'=Y_{k',k'}$,  contradicting (i) or (ii). So (d)(i) fails. Similarly, in case (d)(ii), $G'\subseteq Y_{k',k'}$, another contradiction.
  
In case (c), $G$ is extremal. Every big set $I$ 
satisfies $|V(G)-I|<2k$.  So some cycle  $C_I\in\mathcal C$ has at most one vertex in $V(G)-I$. Since $I$ is independent, $C_I$ has at most one vertex in $I$. Thus $C_I$
is a strong edge and $(c)(i)$ fails. 
Let $J$ be another big set; then $I\cap J=\emptyset$. As cycles in $\mathcal C$ are disjoint,  $C_I=C_J$ or $C_I\cap C_J=\emptyset$. 
Regardless, $C_I\cap C_J\subseteq I\cup J$. So (c)(ii) fails.

\section{Proof of necessity in  Theorem~\ref{dm} }
 Suppose   $G$ does not have $k$ disjoint cycles. Our goal is to show that one of (a)--(f) holds. If $k=2$ then one of the cases (1)--(4) of Theorem~\ref{lovasz}
 holds. If (1) holds then $\alpha'=0$, and so  (a) holds. Case  (2) is (e). Case (3) yields (c)(i), where  the partite set of size $n-3$ is the big set. As $G\in \mathcal D_k$,  it has no vertex  $l$ with $s(l)<3$. So (4) fails, because each leaf $l$ of the forest satisfies $s(l)\le2$.
Thus below we assume
\begin{equation}\label{l41}
k\geq 3.
\end{equation}

Choose a maximum strong matching $M\subseteq F$ with $\alpha(G-W)$  minimum, where $W=V(M)$.
Then $|M|=\alpha'$, $G':=G-W$ is simple, and $\delta(G')\ge2k-1-2\alpha'= 2k'-1$. So $G'\in \mathcal D_{k'}$. Let $n':=|V(G')|=n-2\alpha'$.  
Since $G'$ has no $k'$ disjoint cycles, Theorem~\ref{kky} implies one of the following:
$(\alpha)$~$|G'|\leq 3k'-1$;
$(\beta)$~$k'=1$ and $G'$ is a forest with no isolated vertices;
$(\gamma)$~$k'=2$ and $G'$ is a wheel;
$(\delta)$~$\alpha(G')= n'-2k'+1=n-2k+1$; or
$(\epsilon)$~$k'>1$ is odd and $G'=Y_{k',k'}$.
If $(\alpha)$ holds then so does  (a). So suppose $n'\ge3k'$. In the following we may obtain a contradiction by showing $G$ has $k$ disjoint cycles.


{\bf Case 1:} $(\beta)$ holds. 
By~\eqref{l41}, there are strong edges $yz,y'z'\in M$. As $\mathcal S(G)\ge2k-1$, each vertex $v\in V(G')$ is adjacent
to all but $d_{G'}(v)-1$ vertices of $W$. 

{\em Case 1.1:} $G'$ contains a path on four vertices, or $G'$ contains at least two  components. Let $P=x_1\dots x_t$ be a maximum path in $G'$. Then $x_1$ is a leaf in $G'$, and either $d_{G'}(x_2)=2$ or $x_2$ is adjacent to a leaf $l\ne x_1$. So $vx_1x_2v$ or  $vx_1x_2lv$ is a cycle for all but at most one vertex $v\in W$. If $t \geq 4$, let $s_1=x_t$ and $s_2=x_{t-1}$. Otherwise, $G'$ is disconnected and every component is a star; in a component not containing $P$, let $s_1$ be a leaf and let $s_2$ be its neighbor. As before, for all but at most one vertex $v'\in W$, either $v's_1s_2v'$ is a cycle or  $v's_1s_2l'v'$ is a cycle for some leaf $l'$. Thus $G[(V\smallsetminus W)\cup\{u,v\}]$ contains two disjoint cycles for some $uv\in\{yz,y'z'\}$. These cycles and the $\alpha'-1$ strong edges of $M-uv$ yield $k$ disjoint cycles in $G$, a contradiction. 



{\em Case 1.2:} $G'$ is  a star with center $x_0$ and leaf set $X=\{x_1,x_2,\ldots, x_t\}$.
Since $n'\ge3k'$, $t\ge2$ and $X$ is a big set in $G$. If (c)(i) fails then some vertex in $X$, say $x_1$, is incident to a strong edge, say
 $x_1y$. If $t\geq 3$, then $G$ has $k$ disjoint cycles: $|M-yz+yx_1|$ strong edges and $zx_2x_0x_3z$. 
Else $t=2$. Then  $n=3\alpha'+3k'=2k+1$, as in (d); and each vertex of $G$ is adjacent to all but at most one other vertices. If $x_0z\in E(G)$ then again $G$ has $k$ disjoint cycles:
  $|M-yz+yx_1|$ strong edges and  $zx_0x_2z$, a contradiction. So $N(x_0)=V(G)-z-x_0$, and $G[\{x_0,x_1,x_2,z\}]=C_4=Y_{2,1}$. Also $y$ is the only possible strong neighbor of $x_1$ or $x_2$: if $u\in\{x_1,x_2\}$, $y'z'\in M$ with $y'\ne y$ (maybe $y'=z$) and $uy'\in E(F)$, using the same argument as above, if $z'x_0 \in E(G)$ then $G$ has $k$ disjoint cycles consisting of  $|M-y'z'+y'u|$ strong edges and $G[G'-u+z']$, a contradiction.
Then   $x_0z' \not\in E(G)$, so $z'=z$, and $y'=y$.
Thus $S=N_F(y)\cap\{z,x_0,x_1,x_2\}+y$ is a superstar. So (d)(i) holds.


{\bf Case 2:}  $(\gamma)$ holds. Then $k'=2$ and $G'$ is a wheel with center $x_0$ and rim $x_1x_2\ldots x_tx_1$.
By~\eqref{l41}, there exists $yz\in M$. Since (a) fails, $t\geq 5$.
For $i \in [t]$, 
$$s(x_i) \geq 2k-1 = 2\alpha'+3=2\alpha'+|N(x_i) \cap G'|,$$
 so $x_i$ is adjacent to every vertex in $W$. If $t \geq 6$, then $G'$ has $k$ disjoint cycles:  $|M-yz|$ strong edges, $yx_1x_2y$, $zx_3x_4z$ and $x_0x_5x_6x_0$. Thus $t=5$. If no vertex of $G'$ is incident to a strong edge, then (b)(ii) holds. Therefore, we assume $y$ has a strong edge to $G'$. The other endpoint of the strong edge could be in the outer cycle, or could be $x_0$. If some vertex in the outer cycle, say $x_1$, has a strong edge to $y$, then we have $k$ disjoint cycles: $|M-yz+yx_1|$ strong edges, $zx_2x_3z$ and $x_0x_4x_5x_0$. The
last possibility is that $x_0$ has a strong edge to $y$, and (f) holds.

{\bf Case 3:} $(\epsilon)$ holds.
Then $k'>1$ is odd,  $G'=Y_{k',k'}$ and $n=2\alpha'+3k'$.
Let $X_0=\{x_1,\ldots,x_{k'}\}$, $X_1=\{x'_1,\ldots,x'_{k'}\}$,
and $X_2=\{x''_1,\ldots,x''_{k'}\}$ be the sets from the definition of $Y_{k',k'}$. 
Observe
\begin{equation}\label{*} 
 \overline K_{s+t}\vee(K_{2s}\cup K_{2t})
 \textrm{ contains  $s+t$ disjoint triangles.}
\end{equation}
By degree conditions, each $x'\in X_1\cup X_2$ is adjacent to each $v\in W$ and each $x\in X_0$ is adjacent
to all but at most one $y\in W$. 
If (b)(i) fails then some strong edge
$uy$  is incident with a vertex $u\in V(G')$. If possible, pick $u\in X_1\cup X_2$. By symmetry we may assume $u\notin X_2$. Let $yz$ be the edge of $M$ incident to $y$.
Set $v_0=y$ and $\{v_1,\ldots,v_s\}
=V(F \cap G')+z$.
We will prove that $\{v_0,\ldots,v_s\}$ is a superstar, and use this to show that (d)(i) or (d)(ii) holds. Let $G^*=G-(W-z)$, and observe that $Y_{k'+1,k'}$ is a spanning subgraph of $G^*$ with equality if $X_0+z$ is independent. 

Suppose $xz\in E(G)$ for some $x\in X_0-u$. Then $G$ has $k$ disjoint cycles: $|M-yz+yu|$ strong edges, $zxx''_1z$, and $k'-1$ disjoint cycles in $G^*-\{x,x''_1,u\}$, obtained by applying \eqref{*} directly if $u\in X_1$, or by using $T:=x'_1x'_2x'_3x'_1$ and applying \eqref{*} to $G^*-\{x,x''_1,u\}-T$ if $u\in X_0$. This contradiction implies  $zu$ is the only possible edge in $G[X_0+z]$. Thus if $y$ has two strong neighbors in $X_0$ then $X_0+z$ is independent, and $G^*=K_{k'+1,k'}$. Also by degree conditions, every $x\in X_0-u$ is adjacent to every $w\in W-z$. 
So if $y'z'\in M$ with $y'\ne y$ and $u'\in V(G')$, then $u'y'\notin E(F)$: else $x\in X_0-u-u'$ satisfies $xz'\in E(G)$ and $xz'\notin E(G)$. 
 So $\{v_0,\ldots, v_s\}$ is a superstar. If $X_0+z$ is independent then (d)(i) holds; else (d)(ii) holds. 

{\bf Case 4:} 
 $(\delta)$ holds. Then $\alpha(G')= n'-2k'+1>n'/3$, since $n' \geq 3k'$. 
  So $G'$ is extremal. Let $J$ be a big set in $G'$. Then $|J|=n'-2k'+1= n-2k+1$. So  $G$ is extremal and $J$ is a big set in $G$. 
Also each $x\in J$ is adjacent to every $y\in V(G)-J$.
If (c)(i) fails then some $x\in J$ has a strong neighbor $y$.
Let $yz$ be the edge in $M$ containing $y$. In $F$, consider the maximum matching $M'=M-yz+xy$, and set $G''=G-V(M')$. By the choice of $M$,
$G''$ contains a big set $J'$, and $J'$ is big in $G$. Since $x\notin J'$, \eqref{l4} implies $J'\cap J=\emptyset$ (possibly, $z\in J'$).
If (c)(ii) fails then there is a strong edge $vw$ such that $v\in J\cup J'$ and $w\neq y$. Moreover, by the symmetry
between  $J$ and $J'$,
we may assume  $v\in J'$. Let $uw$ be the edge in $M$ containing $w$.
 Since $M$ is maximum, $u\neq z$. Let $M''=M'-uw+vw$.
Again by the case, $G-V(M'')$ contains a big set  $J''$. Since $x,v \not\in J''$, $J''$ is disjoint from $J\cup J'$. So $n'\ge3|J|>n'$,
 a contradiction.
\qed

\noindent {\bf Acknowledgment.} We thank Mikl\' os Simonovits for attracting our attention to Dirac's paper and for many helpful discussions. We also thank a referee for thoughtful suggestions that improved our article.


\begin{thebibliography}{10}

\bibitem{CH} K. Corr\' adi and A. Hajnal, On the maximal number of independent
circuits in a graph. {\em Acta Math. Acad. Sci. Hungar.} 14 (1963)
423--439.

\bibitem{Di} G. Dirac, Some results concerning the structure of graphs, Canad. Math. Bull.  6 (1963) 183--210.

\bibitem{DE}G. Dirac and P. Erd\H os, On the maximal number of independent circuits in a graph,
{\em Acta Math. Acad. Sci. Hungar.} 14 (1963) 79--94.

\bibitem{KK2}H. A. Kierstead, A. V. Kostochka, and E. C. Yeager, On the Corr\' adi-Hajnal Theorem and a question of Dirac,
submitted.

\bibitem{Lo} L. Lov\' asz, On graphs not containing independent circuits, (Hungarian. English summary)
Mat. Lapok 16 (1965), 289--299.

\end{thebibliography}
\end{document}